 \newtheorem{proposition}{Proposition}[section]
\newtheorem{theorem}{Theorem}[section]
\newtheorem{lemma}{Lemma}[section]
\newtheorem{example}{Example}[section]
\theoremstyle{definition}
\newtheorem{definition}{Definition}[section]
\newtheorem{remark}{Remark}[section]
\begin{document}

\title[A pretorsion theory for categories]{A pretorsion theory for the category of all categories}

 \author[Jo\~{a}o J. Xarez]{Jo\~{a}o J. Xarez}



 \address{CIDMA - Center for Research and Development in Mathematics and Applications,
Department of Mathematics, University of Aveiro, Portugal.}

 \email{xarez@ua.pt}

 \subjclass[2020]{18E40,18B99}

 \keywords{Category of all categories, Torsion theory}


 \begin{abstract}
A pretorsion theory for the category of all categories is presented. The associated prekernels and precokernels are calculated for every functor.
\end{abstract}

\maketitle

\section{Introduction}
\label{sec:Introduction}

In \cite{stable category} it was shown that, in the category $Preord$ of preordered sets, there is a natural notion of pretorsion theory, in which the partially ordered sets are the torsion-free objects and the sets endowed with an equivalence relation are the torsion objects. The notion of pretorsion theory given in \cite{stable category} generalizes the notion of torsion theories for pointed categories given in \cite{JanTh:torsion theories}.\\

In this paper we give what can be seen as an extension of the pretorsion theory for preordered sets to categories. A torsion-free object is now a category whose image by the well known reflection $Cat\rightarrow Preord$ (cf. \cite{X:ml}) is a partially ordered set. A torsion object is in turn the category whose image by the same functor is an equivalence relation. They are called below respectively the antisymmetric and the symmetric categories (see the beginning of Section \ref{sec:symmetric, antisymmetric categories}).\\

In this way, the trivial objects which were sets for $Preord$ are now the collections of monoids for the category of all categories $Cat$.\\

To assert that in fact this is a pretorsion theory, it was necessary to characterize prekernels and precokernels (corresponding to kernels and cokernels in torsion theories). The crucial result in this paper being the construction of precokernels in $Cat$ (see Proposition \ref{prop:Catmon-Precokernels}).

\section{Pretorsion theory for a general category}\label{sec:pretorsion_theory}

$\textsc{Data}$\\

Consider two full replete\footnote{I.e., whose objects are closed under isomorphisms.} subcategories $\mathcal{T}$ and $\mathcal{F}$ of $\mathcal{C}$. They are called respectively the \textit{torsion} and the \textit{torsion-free subcategories}.

Set $\mathcal{Z}=\mathcal{T}\bigcap \mathcal{F}$, the full subcategory of $\mathcal{C}$ determined by the objects which are both in $\mathcal{T}$ and in $\mathcal{F}$. These objects are called \textit{trivial}.

Let $Triv_\mathcal{Z}(X,Y)$ be the set of all morphisms $X\rightarrow Y$ in $\mathcal{C}$ that factor through an object of $\mathcal{Z}$. Such morphisms will be called $\mathcal{Z}$\textit{-trivial} (or simply \textit{trivial}, if there is no doubt about the subcategories considered). Notice that these trivial morphisms form an \textit{ideal of morphisms} in the sense of \cite{Ehresmann}.\\

$\textsc{End of data}$\\

The following definitions, proposition and example are to be considered in the context of the data just presented.

\begin{definition} \label{def:prekernel and precokernel}

A morphism $k:X\rightarrow A$ is a $\mathcal{Z}$-\textit{prekernel} (or simply, \textit{prekernel}) of a morphism $f:A\rightarrow A'$ if the following two conditions are satisfied:
\begin{enumerate}
\item the composite $f\circ k$ is a trivial morphism;
\item whenever $\lambda :Y\rightarrow A$ is a morphism in $\mathcal{C}$ and $f\circ\lambda$ is trivial, then there exists a unique morphism $\lambda ':Y\rightarrow X$ in $\mathcal{C}$ such that $\lambda =k\circ\lambda '$.
\end{enumerate}
Dually, one obtains the notion of ($\mathcal{Z}$-)\textit{precokernel}.

\end{definition}

\begin{proposition}\label{prop:Galois_connection}

Suppose that $\mathcal{Z}$-prekernels and $\mathcal{Z}$-precokernels exist in $\mathcal{C}$.
Then, given any morphism $f$ in $\mathcal{C}$,
$$preker(precoker(preker f))\cong preker f$$ $$and$$ $$precoker(preker(precoker f))\cong precoker f,$$
\noindent where $preker f$ stands for the $\mathcal{Z}$-prekernel of $f$, and $precoker f$ stands for the $\mathcal{Z}$-precokernel of $f$.

\end{proposition}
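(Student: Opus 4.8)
The plan is to prove the first isomorphism, the second being formally dual, by exhibiting a pair of mutually inverse morphisms between the two candidate prekernels using nothing but the defining universal properties together with the fact (noted in the Data) that the trivial morphisms form an ideal.

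Fix a morphism $f\colon A\to A'$ and set $k\colon K\to A$ for $preker\,f$, then $c\colon A\to C$ for $precoker\,k$, and finally $k'\colon K'\to A$ for $preker\,c$; the goal is an isomorphism $k'\cong k$ over $A$. For one direction, since $c$ is the precokernel of $k$, condition (1) (in its dual form) gives that $c\circ k$ is trivial, so feeding $\lambda=k$ into the universal property of $k'=preker\,c$ yields a unique $u\colon K\to K'$ with $k=k'\circ u$. For the other direction one needs $f\circ k'$ to be trivial, in order to invoke the universal property of $k=preker\,f$: to obtain this, observe that $f\circ k$ is trivial by condition (1) for $k=preker\,f$, so the universal property of $c=precoker\,k$ produces a (unique) $v\colon C\to A'$ with $f=v\circ c$; then $f\circ k'=v\circ(c\circ k')$, and $c\circ k'$ is trivial by condition (1) for $k'=preker\,c$, whence $f\circ k'$ is trivial because the trivial morphisms form an ideal. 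Now the universal property of $k=preker\,f$ gives a unique $w\colon K'\to K$ with $k'=k\circ w$.

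It then remains to check that $u$ and $w$ are mutually inverse. From $k\circ(w\circ u)=k'\circ u=k=k\circ\mathrm{id}_K$ and the uniqueness clause in the universal property of $k$ (legitimately applied to $\lambda=k$, since $f\circ k$ is trivial) one gets $w\circ u=\mathrm{id}_K$; symmetrically, $k'\circ(u\circ w)=k\circ w=k'=k'\circ\mathrm{id}_{K'}$ together with the uniqueness clause for $k'$ gives $u\circ w=\mathrm{id}_{K'}$. Hence $u$ is an isomorphism over $A$, which is exactly the assertion $preker(precoker(preker\,f))\cong preker\,f$; dualizing every arrow yields $precoker(preker(precoker\,f))\cong precoker\,f$.

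I expect the only non-formal step to be the verification that $f\circ k'$ is trivial: everything else is a bookkeeping exercise with universal properties, but that step is where the whole argument turns, and it relies essentially on the factorization $f=v\circ c$ through the precokernel $c=precoker(preker\,f)$ combined with the ideal property of trivial morphisms. If desired, the argument can be packaged more conceptually by noting that, for a fixed object $A$, $preker$ and $precoker$ restrict to an (order-reversing) Galois connection between suitable preorders of morphisms into and out of $A$, and the stated identities are the standard "$fgf=f$, $gfg=g$" of such a connection; but the direct construction above is self-contained and is what I would write out.
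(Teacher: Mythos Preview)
Your proof is correct and is essentially the same approach as the paper's: the paper's own proof is a one-sentence appeal to the Galois connection between prekernels and precokernels (citing Mac Lane, \S VIII.1), which is exactly the conceptual packaging you mention in your final paragraph. Your explicit universal-property argument simply spells out the standard $fgf=f$ identity of that Galois connection, with the key step being the factorization $f=v\circ c$ through $c=precoker(preker\,f)$ together with the ideal property of trivial morphisms; the paper leaves all of this implicit.
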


\begin{proof}
This result is a consequence of the obvious Galois connections associated to each object in $\mathcal{C}$ (cf. \cite[\S VIII.1]{SM:cat}, for the classic and similar case of kernel and cokernel).

\end{proof}

\begin{definition} \label{def:short preexact sequence}

It is said that

$$
\begin{picture}(100,0)

\put(0,0){$A$}\put(50,0){$B$}\put(100,0){$C$,}

\put(13,5){\vector(1,0){35}}\put(63,5){\vector(1,0){35}}

\put(25,12){$f$}\put(75,12){$g$}

\end{picture}
$$ is a \textit{short} $\mathcal{Z}$-\textit{preexact sequence} (or simply, \textit{short preexact sequence}) in $\mathcal{C}$ if $f$ is a ($\mathcal{Z}$-)prekernel of $g$ and $g$ is a ($\mathcal{Z}$-)precokernel of $f$.

\end{definition}

\begin{remark} \label{remark:canonical short exact sequences}

The Proposition \ref{prop:Galois_connection} gives canonical ways of constructing short $\mathcal{Z}$-preexact sequences, in a category with $\mathcal{Z}$-prekernels and $\mathcal{Z}$-precokernels (cf. \cite[\S VIII.1]{SM:cat}, for the classic and similar case of kernel and cokernel).

\end{remark}

\begin{definition} \label{def:pretorsion theory}

The pair $(\mathcal{T},\mathcal{F})$ is a pretorsion theory provided the following two conditions are satisfied:
\begin{enumerate}
\item $Hom_\mathcal{C} (T,F)=Triv_\mathcal{Z} (T,F)$, for every object $T\in \mathcal{T}$ and every object $F\in \mathcal{F}$;

\item for any object $B$ of $\mathcal{C}$, there is a short $\mathcal{Z}$-preexact sequence

$$
\begin{picture}(100,0)

\put(0,0){$A$}\put(50,0){$B$}\put(100,0){$C$,}

\put(13,5){\vector(1,0){35}}\put(63,5){\vector(1,0){35}}

\put(25,12){$f$}\put(75,12){$g$}

\end{picture}
$$ with $A\in\mathcal{T}$ and $C\in\mathcal{F}$.
\end{enumerate}

\end{definition}

\begin{remark} \label{remark:uniqueness of short preexact sequence}

The short $\mathcal{Z}$-preexact sequence, given in condition (2) in Definition \ref{def:pretorsion theory} just above, is uniquely determined up to isomorphism (cf. Proposition 3.1 in \cite{pretorsion in general}).

\end{remark}

\begin{example}

The pair $(Equiv, Ord)$ is a pretorsion theory for the category $Preord$, whose objects are the reflexive and transitive relations, and the morphisms are the monotone maps. $Equiv$ denotes the subcategory of equivalence relations, and $Ord$ the subcategory of partial orders, so that $Equiv\bigcap Ord = Set$ is the category of sets. Check \cite{stable category} for details.

\end{example}

\section{Symmetric and antisymmetric categories}
\label{sec:symmetric, antisymmetric categories}

$Cat$ is the category whose objects are the small categories, and whose objects are the functors.

$CatEquiv$ will denote the full subcategory of $Cat$ determined by the \textit{symmetric categories} $\mathbb{T}$, meaning that for any $T,T'\in\mathbb{T}$, if $Hom_\mathbb{T}(T,T')\neq\emptyset$ then $Hom_\mathbb{T}(T',T)\neq\emptyset$.

$CatOrd$ will denote the full subcategory of $Cat$ determined by the \textit{antisymmetric categories} $\mathbb{F}$: for any $F,F'\in\mathbb{F}$, if $Hom_\mathbb{F}(F,F')\neq\emptyset$ and $Hom_\mathbb{F}(F',F)\neq\emptyset$, then $F=F'$.

Therefore, $CatMon=CatEquiv\bigcap CatOrd$ is the full (and replete) subcategory of $Cat$ whose objects are classes of monoids.

The trivial functors in $Triv_{CatMon}(\mathbb{A},\mathbb{B})$ (see Data at the beginning of section \ref{sec:pretorsion_theory}), are going to be characterized in the following Lemma \ref{lemma:trivial functor}.

\begin{lemma}\label{lemma:trivial functor}

The functor $F:\mathbb{A}\rightarrow \mathbb{B}$ is trivial if and only if, for every $A,A'\in\mathbb{A}$, if $Hom_\mathbb{A}(A,A')\neq\emptyset$ then $F(A)=F(A')$.

\end{lemma}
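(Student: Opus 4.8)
The plan is to prove both implications directly, using the description of $\mathcal{Z}=CatMon$ as the class of categories all of whose hom-sets between distinct objects are empty — equivalently, categories equivalent to a disjoint union of monoids, i.e. those whose underlying "preorder" is the discrete equality relation. The key observation is that a functor $G:\mathbb{A}\to\mathbb{C}$ with $\mathbb{C}\in CatMon$ must send any two objects $A,A'$ with $Hom_\mathbb{A}(A,A')\neq\emptyset$ to the same object of $\mathbb{C}$, since a morphism $A\to A'$ is carried to a morphism $G(A)\to G(A')$, and in $\mathbb{C}$ the existence of such a morphism forces $G(A)=G(A')$.

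For the "only if" direction, suppose $F:\mathbb{A}\to\mathbb{B}$ is trivial, so $F=H\circ G$ for some $\mathbb{Z}\in CatMon$ and functors $G:\mathbb{A}\to\mathbb{Z}$, $H:\mathbb{Z}\to\mathbb{B}$. Given $A,A'\in\mathbb{A}$ with $Hom_\mathbb{A}(A,A')\neq\emptyset$, the observation above gives $G(A)=G(A')$, hence $F(A)=H(G(A))=H(G(A'))=F(A')$. This is the easy direction and is essentially immediate.

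For the "if" direction, assume that $Hom_\mathbb{A}(A,A')\neq\emptyset$ implies $F(A)=F(A')$; I must exhibit a factorization of $F$ through an object of $CatMon$. The natural candidate is the following: let $\sim$ be the equivalence relation on the objects of $\mathbb{A}$ generated by the relation "$Hom_\mathbb{A}(A,A')\neq\emptyset$", and let $\mathbb{Z}$ be the category whose objects are the $\sim$-classes, with a hom-set from a class to itself given by (a suitable monoid built from) the morphisms of $\mathbb{A}$ within that class, and empty hom-sets between distinct classes. Concretely one can take $\mathbb{Z}$ to be the full subcategory-type construction: objects are the connected components of $\mathbb{A}$ (in the sense of the equivalence relation above), and $Hom_\mathbb{Z}([A],[A]) = \coprod$ of all $Hom_\mathbb{A}(B,B')$ with $B,B'$ in the component of $A$, modulo the relation identifying a morphism with its pre- and post-composites — but more simply, since $F$ itself already collapses each component to a single object of $\mathbb{B}$, one can just take $\mathbb{Z}$ to be the image in a coarse sense: set $G:\mathbb{A}\to\mathbb{Z}$ sending $A$ to its component and a morphism $A\to A'$ to its image, and $H:\mathbb{Z}\to\mathbb{B}$ induced by $F$. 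The point is that $\mathbb{Z}$ so constructed lies in $CatMon$ because distinct components have no morphisms between them (by construction) and composition is well-defined.

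The main obstacle is verifying that the category $\mathbb{Z}$ in the "if" direction is genuinely well-defined as a category — that composition of morphisms within a connected component is associative and unital and that $G$, $H$ are honest functors with $F = H\circ G$. The subtlety is that a single connected component of $\mathbb{A}$ need not be itself a monoid (it can have several objects with morphisms going various ways), so one cannot naively take $Hom_\mathbb{Z}$ to be a single hom-set of $\mathbb{A}$; instead the cleanest route is to define $\mathbb{Z}$ as the quotient category $\mathbb{A}/\!\!\approx$ where $\approx$ identifies all objects in a common component and identifies morphisms accordingly (this is a standard localization/quotient construction, and the hypothesis on $F$ is exactly what guarantees $F$ factors through this quotient). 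One then checks $\mathbb{Z}\in CatMon$: between distinct components there are no morphisms, and each component becomes a single object whose endomorphism set is a monoid. I expect the routine part to be this verification that the quotient is well-defined and functorial, and I would organize the proof so that this construction is spelled out once and the two factorization maps are read off immediately.
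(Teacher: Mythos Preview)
Your ``only if'' direction matches the paper's argument exactly.

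For the ``if'' direction your approach can be made to work, but it is substantially more complicated than necessary, and the paper takes a much simpler route. You build $\mathbb{Z}$ from the \emph{domain} $\mathbb{A}$ by collapsing connected components; as you yourself note, this requires care---the quotient category obtained by identifying objects is not entirely trivial to define (one must reconcile the several identities $1_A$, $1_{A'}$, $\ldots$ landing on the same object, freely adjoin composites that did not exist in $\mathbb{A}$, and so on; this is essentially the machinery the paper deploys later in constructing precokernels). The paper instead builds the intermediate category from the \emph{codomain} $\mathbb{B}$: let $\mathbb{C}$ have the same objects as $\mathbb{B}$, with $Hom_\mathbb{C}(B,B)=Hom_\mathbb{B}(B,B)$ and $Hom_\mathbb{C}(B,B')=\emptyset$ for $B\neq B'$. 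This is manifestly a category and manifestly in $CatMon$; the inclusion $H:\mathbb{C}\hookrightarrow\mathbb{B}$ is a functor; and the hypothesis on $F$ says precisely that every morphism of $\mathbb{A}$ is sent by $F$ to an endomorphism in $\mathbb{B}$, so $F$ corestricts to a functor $G:\mathbb{A}\to\mathbb{C}$ with $F=H\circ G$. No quotients, no well-definedness checks. Your route buys nothing extra here; the paper's construction is strictly simpler and sidesteps every obstacle you flagged.
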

\begin{proof}
If $F:\mathbb{A}\rightarrow \mathbb{B}$ is trivial, by definition $F$ factors through some $\mathbb{C}\in CatMon$:
$$
\begin{picture}(100,0)

\put(0,0){$\mathbb{A}$}\put(50,0){$\mathbb{C}$}\put(100,0){$\mathbb{B}$.}

\put(13,5){\vector(1,0){35}}\put(63,5){\vector(1,0){35}}

\put(25,12){$G$}\put(75,12){$H$}.

\end{picture}
$$
Then, as $\mathbb{C}$ does not have morphisms between distinct objects, it follows trivially that if there exists a morphism $f:A\rightarrow A'$ then $G(A)=G(A')$ and $F(A)=H(G(A))=H(G(A'))=F(A')$.

Conversely, supposing that
$$\forall_{A,A'\in\mathbb{A}} Hom_\mathbb{A}(A,A')\neq\emptyset\Rightarrow F(A)=F(A'),$$
it is obvious that
$$
\begin{picture}(100,0)
\put(-65,0){$F=H\circ G:$}
\put(0,0){$\mathbb{A}$}\put(50,0){$\mathbb{C}$}\put(100,0){$\mathbb{B}$,}

\put(13,5){\vector(1,0){35}}\put(63,5){\vector(1,0){35}}

\put(25,12){$G$}\put(75,12){$H$}.

\end{picture}
$$
with $obj(\mathbb{C})=obj(\mathbb{B})$ ($\mathbb{C}$ and $\mathbb{B}$ have the same objects) and \begin{center}
$Hom_\mathbb{C}(B,B')=\left\{ \begin{array}{lll} Hom_\mathbb{B}(B,B')\ if\ B=B' \\ \\

                            \emptyset\ \ \ \ otherwise,
                            \end{array}  \right.$
\end{center}
$H$ being the inclusion functor, and $G$ the restriction of the functor $F$ to the codomain $\mathbb{C}$.
Finally, notice that $\mathbb{C}\in CatMon$.
\end{proof}

The following Proposition \ref{prop:functors from symetric to antisymetric are trivial} asserts condition (1) in Definition \ref{def:pretorsion theory}. Notice that, in order to show that $(CatEquiv,CatOrd)$ is a pretorsion theory for $Cat$, it only remains to check condition (2) in Definition \ref{def:pretorsion theory}.

\begin{proposition}\label{prop:functors from symetric to antisymetric are trivial}
$$Hom_{Cat}(\mathbb{T},\mathbb{F})=Triv_{CatMon}(\mathbb{T},\mathbb{F}),$$ for every $\mathbb{T}\in CatEquiv$ and every $\mathbb{F}\in CatOrd$.
\end{proposition}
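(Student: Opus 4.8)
The plan is to show the two inclusions $Triv_{CatMon}(\mathbb{T},\mathbb{F})\subseteq Hom_{Cat}(\mathbb{T},\mathbb{F})$ and $Hom_{Cat}(\mathbb{T},\mathbb{F})\subseteq Triv_{CatMon}(\mathbb{T},\mathbb{F})$. The first inclusion is immediate, since $Triv_{CatMon}(\mathbb{T},\mathbb{F})$ is by definition a subset of $Hom_{Cat}(\mathbb{T},\mathbb{F})$. So the whole content lies in the reverse inclusion: every functor $F:\mathbb{T}\to\mathbb{F}$ from a symmetric category to an antisymmetric category is trivial. By Lemma \ref{lemma:trivial functor}, this amounts to checking that whenever $Hom_\mathbb{T}(T,T')\neq\emptyset$ we must have $F(T)=F(T')$.

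The key step is to exploit symmetry of $\mathbb{T}$ and antisymmetry of $\mathbb{F}$ in tandem. Suppose $Hom_\mathbb{T}(T,T')\neq\emptyset$. Since $\mathbb{T}\in CatEquiv$, the definition of symmetric category gives $Hom_\mathbb{T}(T',T)\neq\emptyset$ as well. Applying the functor $F$ to a morphism $T\to T'$ and to a morphism $T'\to T$ yields $Hom_\mathbb{F}(F(T),F(T'))\neq\emptyset$ and $Hom_\mathbb{F}(F(T'),F(T))\neq\emptyset$. Now invoke antisymmetry of $\mathbb{F}$: having non-empty hom-sets in both directions between $F(T)$ and $F(T')$ forces $F(T)=F(T')$. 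This is exactly the condition in Lemma \ref{lemma:trivial functor}, so $F$ is trivial, i.e. $F\in Triv_{CatMon}(\mathbb{T},\mathbb{F})$, completing the reverse inclusion.

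I do not anticipate a genuine obstacle here; the statement is essentially a direct unwinding of the three definitions (symmetric category, antisymmetric category, trivial functor via Lemma \ref{lemma:trivial functor}) composed with functoriality. The only point deserving a word of care is making explicit that one applies $F$ to morphisms in \emph{both} directions before using antisymmetry — a single direction would not suffice. One might also remark that this argument is the categorical analogue of the fact that a monotone map from an equivalence relation to a poset is constant on each connected component, which is the corresponding statement underlying the $(Equiv,Ord)$ pretorsion theory on $Preord$ recalled in the Example above.
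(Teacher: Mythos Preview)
Your proof is correct and follows exactly the approach indicated in the paper: the paper's own proof simply states that the result follows immediately from the definitions of symmetric and antisymmetric categories together with Lemma~\ref{lemma:trivial functor}, which is precisely the argument you have spelled out in detail.
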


\begin{proof}

The proof follows immediately from the respective definitions of symmetrical and antisymmetrical categories $\mathbb{T}$ and $\mathbb{F}$, and from Lemma \ref{lemma:trivial functor}.

\end{proof}

\section{CatMon-Prekernels and CatMon-Precokernels}
\label{sec:CatMon-Prekernels, CatMon-Precokernels}

\begin{proposition}\label{prop:CatMon-Prekernels}

Let $F:\mathbb{A}\rightarrow\mathbb{A'}$ be any functor in $Cat$.

Then, the functor $K:\mathbb{X}\rightarrow\mathbb{A}$ is a prekernel of $F$, where

\noindent $obj(\mathbb{X})=obj(\mathbb{A})$ ($\mathbb{X}$ and $\mathbb{A}$ have the same objects),

\begin{center}
$Hom_\mathbb{X}(A,A')=\left\{ \begin{array}{lll} Hom_\mathbb{A}(A,A')\ if\ F(A)=F(A') \\ \\

                            \emptyset\ \ \ \ otherwise\ (F(A)\neq F(A'))
                            \end{array}  \right.$
\end{center}

for every $A,A'\in\mathbb{A}$,

\noindent and $K$ is the inclusion functor.

\end{proposition}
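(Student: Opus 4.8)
The plan is to verify the two conditions in Definition~\ref{def:prekernel and precokernel} directly for the inclusion functor $K:\mathbb{X}\rightarrow\mathbb{A}$ described in the statement. First, I would check that $F\circ K$ is trivial: since $K$ is the identity on objects, $(F\circ K)(A)=F(A)$, and whenever $Hom_\mathbb{X}(A,A')\neq\emptyset$ we have $Hom_\mathbb{A}(A,A')\neq\emptyset$ \emph{and} $F(A)=F(A')$ by the very definition of the hom-sets of $\mathbb{X}$; hence $(F\circ K)(A)=(F\circ K)(A')$, so $F\circ K$ is trivial by Lemma~\ref{lemma:trivial functor}. This is the easy half.

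For condition (2), suppose $\lambda:\mathbb{Y}\rightarrow\mathbb{A}$ is a functor with $F\circ\lambda$ trivial. By Lemma~\ref{lemma:trivial functor}, $Hom_\mathbb{Y}(Y,Y')\neq\emptyset$ implies $F(\lambda(Y))=F(\lambda(Y'))$. I would define $\lambda':\mathbb{Y}\rightarrow\mathbb{X}$ to agree with $\lambda$ on objects and on morphisms; the point to check is that this is well defined as a functor into $\mathbb{X}$, i.e.\ that every morphism $g:Y\rightarrow Y'$ of $\mathbb{Y}$ actually lands in $Hom_\mathbb{X}(\lambda(Y),\lambda(Y'))$. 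Since $Hom_\mathbb{Y}(Y,Y')\neq\emptyset$ forces $F(\lambda(Y))=F(\lambda(Y'))$, the morphism $\lambda(g)$ lies in $Hom_\mathbb{A}(\lambda(Y),\lambda(Y'))=Hom_\mathbb{X}(\lambda(Y),\lambda(Y'))$, so $\lambda'$ is a legitimate functor; functoriality (respecting identities and composites) is inherited from $\lambda$ because $K$ is an identity-on-objects inclusion and so $\lambda'$ has literally the same object- and arrow-assignments as $\lambda$. Clearly $K\circ\lambda'=\lambda$.

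For uniqueness of $\lambda'$: any functor $\mu:\mathbb{Y}\rightarrow\mathbb{X}$ with $K\circ\mu=\lambda$ must satisfy $\mu(Y)=\lambda(Y)$ on objects and $\mu(g)=\lambda(g)$ on arrows, because $K$ is faithful and injective on objects (indeed $K$ is the identity on objects and the inclusion on arrows), so $\mu=\lambda'$. Assembling these observations gives that $K$ is a prekernel of $F$.

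I do not expect a serious obstacle here; the only thing requiring a moment's care is the well-definedness of $\lambda'$ on morphisms, which is exactly where the triviality hypothesis on $F\circ\lambda$ (via Lemma~\ref{lemma:trivial functor}) is used. The remaining verifications are routine since $\mathbb{X}$ is built as a wide subcategory of $\mathbb{A}$ with $K$ an identity-on-objects faithful inclusion.
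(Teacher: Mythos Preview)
Your argument is correct and follows essentially the same route as the paper's own proof: verify that $F\circ K$ is trivial via Lemma~\ref{lemma:trivial functor}, then show that any $\lambda$ with $F\circ\lambda$ trivial factors uniquely through $K$ by observing that every arrow $\lambda(g)$ already lies in $\mathbb{X}$. One small point: the paper begins by explicitly checking that $\mathbb{X}$ is a subcategory of $\mathbb{A}$ (closed under identities and composition), whereas you take this for granted when you call $\mathbb{X}$ a ``wide subcategory''; it is indeed routine, but worth a line since the statement only specifies the hom-sets.
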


\begin{proof}

First, one has to establish that $\mathbb{X}$ is a category:
\begin{itemize}
\item $1_A\in Hom_\mathbb{A}(A,A)=Hom_\mathbb{X}(A,A)$, hence the identity of each object in $\mathbb{A}$ is also in $\mathbb{X}$;
\item consider in $\mathbb{X}$ the composable morphisms $
\begin{picture}(0,0)

\put(0,0){$A$}\put(50,0){$A'$}\put(100,0){$A''$,}

\put(13,5){\vector(1,0){35}}\put(63,5){\vector(1,0){35}}

\put(25,12){$f$}\put(75,12){$g$}

\end{picture}
$

\noindent then, necessarily, by the definition of $\mathbb{X}$,

$F(A)=F(A')=F(A'')\Rightarrow F(A)=F(A'')$

$\Rightarrow g\circ f\in Hom_\mathbb{A}(A,A'')=Hom_\mathbb{X}(A,A'')$,

\noindent hence the composition of any two morphisms of $\mathbb{X}$ is also in $\mathbb{X}$.
\end{itemize}

Secondly, one has to show that $F\circ K$ is a trivial functor (cf. Lemma \ref{lemma:trivial functor}):
consider in $\mathbb{X}$ a morphism $f:A\rightarrow A'$ with $A\neq A'$; one wants to show that $F\circ K (A)=F\circ K (A')$; being $K$ the inclusion functor, $F\circ K(f)=F(f):F(A)\rightarrow F(A')$, and $F(A)=F(A')$ by the construction of $\mathbb{X}$, that is $F\circ K (A)=F\circ K (A')$.\\

Thirdly and finally, one has to check the universal property given in Definition \ref{def:prekernel and precokernel}:
suppose $\Lambda :\mathbb{Y}\rightarrow\mathbb{A}$ is a functor such that $F\circ \Lambda$ is trivial; since $K$ is the inclusion functor, one has to show that $\Lambda(\mathbb{Y})\subseteq K(\mathbb{X})$; suppose by ``reductio ad absurdum" that there is in $\mathbb{Y}$ a morphism $g:Y\rightarrow Y'$ such that $\Lambda(g):\Lambda(Y)\rightarrow \Lambda(Y')$ is not in $K(\mathbb{X})$; then, by the construction of $\mathbb{X}$, $F(\Lambda (Y))\neq F(\Lambda (Y'))$, which contradicts the assumption that $F\circ\Lambda$ is trivial (cf. Lemma \ref{lemma:trivial functor}).

\end{proof}

In the following Proposition \ref{prop:Catmon-Precokernels}, a construction of a precokernel of any functor is given.

\begin{proposition}\label{prop:Catmon-Precokernels}

Let $F:\mathbb{A}\rightarrow \mathbb{A}'$ be a functor in $Cat$.

Consider the well-known adjunction $(G,U,\eta): Graph\rightarrow Cat$, where G is the left-adjoint of the forgetful functor $U$ from $Cat$ into the category of graphs, and $\eta :1_{Graph}\rightarrow UG$ is the unit of the adjunction (see \cite[II.7]{SM:cat}).

Let $\zeta_F$ be the equivalence relation on the set of objects of $\mathbb{A}'$, $obj(\mathbb{A}')$, generated by $\{(F(A_1),F(A_2))|Hom_\mathbb{A}(A_1,A_2)\neq\emptyset; A_1,A_2\in\mathbb{A}\}$.

Consider the graph morphism $(1_{mor(\mathbb{A}')},\pi_{\zeta_F}): U\mathbb{A}'\rightarrow \mathbb{P}$ from the underlying graph of $\mathbb{A}'$ into the graph $\mathbb{P}=(mor(\mathbb{A}'),obj(\mathbb{A}')/\zeta_F)$, where $1_{mor(\mathbb{A}')}$ is the identity on the arrows and $\pi_{\zeta_F}$ is the canonical projection of the set of nodes $obj(\mathbb{A}')$ into its equivalence classes.

Consider the unit morphism of $\mathbb{P}$, $\eta_\mathbb{P}: \mathbb{P}\rightarrow UG\mathbb{P}$, in the adjunction $G\dashv U$.

Consider finally the canonical functor $\Pi :G\mathbb{P}\rightarrow G\mathbb{P}/\equiv$, where $\equiv$ stands for the least congruence (in the sense of \cite[II.8]{SM:cat}) on $G\mathbb{P}$ which makes the graph morphism
$$U\pi =U\Pi\circ \eta_\mathbb{P} \circ (1_{mor(\mathbb{A}')},\pi_{\zeta_F}):U\mathbb{A}'\rightarrow U(G\mathbb{P}/\equiv)$$
a functor from $\mathbb{A}'$ into $G\mathbb{P}/\equiv$.\\

Then, $\pi :\mathbb{A}'\rightarrow G\mathbb{P}/\equiv$ is a precokernel of $F:\mathbb{A}\rightarrow \mathbb{A}'$.

\end{proposition}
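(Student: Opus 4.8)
The plan is to verify the two defining conditions of a precokernel for $\pi:\mathbb{A}'\rightarrow G\mathbb{P}/\!\equiv$ directly from Definition~\ref{def:prekernel and precokernel} (dualized), using Lemma~\ref{lemma:trivial functor} to translate ``triviality of $\pi\circ F$'' and ``triviality of $\Lambda\circ F$'' into the concrete condition on objects. First I would unwind the construction: the equivalence relation $\zeta_F$ is designed precisely so that $F(A_1)$ and $F(A_2)$ are identified in $obj(\mathbb{A}')/\zeta_F$ whenever $\mathrm{Hom}_\mathbb{A}(A_1,A_2)\neq\emptyset$; passing to the graph $\mathbb{P}$, then to the free category $G\mathbb{P}$, then quotienting by the least congruence $\equiv$ that makes $U\pi$ a genuine functor, produces a category $G\mathbb{P}/\!\equiv$ together with a functor $\pi:\mathbb{A}'\rightarrow G\mathbb{P}/\!\equiv$ (the functoriality is built in by the very choice of $\equiv$). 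So the object map of $\pi$ factors through $\pi_{\zeta_F}$, and hence $\pi(F(A_1))=\pi(F(A_2))$ whenever $\mathrm{Hom}_\mathbb{A}(A_1,A_2)\neq\emptyset$; by Lemma~\ref{lemma:trivial functor} this is exactly the statement that $\pi\circ F$ is a trivial functor, giving condition~(1).

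For the universal property (condition (2), dualized), I would start from an arbitrary functor $\Lambda:\mathbb{A}'\rightarrow\mathbb{Y}$ with $\Lambda\circ F$ trivial, and produce a unique $\Lambda':G\mathbb{P}/\!\equiv\;\rightarrow\mathbb{Y}$ with $\Lambda=\Lambda'\circ\pi$. Triviality of $\Lambda\circ F$ means, via Lemma~\ref{lemma:trivial functor}, that $\Lambda(F(A_1))=\Lambda(F(A_2))$ whenever $\mathrm{Hom}_\mathbb{A}(A_1,A_2)\neq\emptyset$; since $\Lambda$ sends objects to objects and each generator of $\zeta_F$ therefore to an equal pair, $\Lambda$ must identify all objects lying in a single $\zeta_F$-class. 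Consequently the object part of $\Lambda$ factors (uniquely) through $\pi_{\zeta_F}$, so we get a graph morphism $U\mathbb{A}'\to U\mathbb{Y}$ factoring through $\mathbb{P}$; I would then use the adjunction $G\dashv U$ (the universal property of $\eta_\mathbb{P}$) to get a unique functor $G\mathbb{P}\to\mathbb{Y}$, and finally observe that this functor coincides with $\Lambda$ after composing with $\eta_\mathbb{P}$ and hence respects the relations that $U\pi$ being a functor forces, i.e.\ it factors through the quotient by $\equiv$ (because $\equiv$ is the \emph{least} such congruence). This yields the desired $\Lambda'$; uniqueness follows since $\pi$ is (object-wise) surjective on the generators of $G\mathbb{P}/\!\equiv$ and $\Lambda'$ is determined on generating arrows by the equation $\Lambda=\Lambda'\circ\pi$ together with functoriality.

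I expect the main obstacle to be the last factorization step: showing that the unique functor $G\mathbb{P}\to\mathbb{Y}$ obtained from the adjunction actually factors through the congruence $\equiv$. The point is that $\equiv$ is defined as the least congruence making $U\pi=U\Pi\circ\eta_\mathbb{P}\circ(1_{mor(\mathbb{A}')},\pi_{\zeta_F})$ a functor, so I must argue that any competing congruence arising from $\Lambda$ — namely the kernel congruence of the functor $G\mathbb{P}\to\mathbb{Y}$ — contains $\equiv$; equivalently, that the induced map $G\mathbb{P}\to\mathbb{Y}$ already sends each pair of arrows identified by ``functoriality of $U\pi$'' to the same arrow of $\mathbb{Y}$. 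This holds because $\Lambda$ itself is a functor on $\mathbb{A}'$: the defining relations of $\equiv$ are precisely the images under $\eta_\mathbb{P}$ of the equations $\pi(g\circ f)=\pi(g)\circ\pi(f)$ and $\pi(1_{B})=1_{\pi(B)}$ in $\mathbb{A}'$, and these are preserved by any functorial extension of $\Lambda$'s data. Making this bookkeeping precise — tracking the generators of $\equiv$ through $\eta_\mathbb{P}$ and $\Pi$ and checking they land in the kernel congruence of $G\mathbb{P}\to\mathbb{Y}$ — is the technical heart of the proof; everything else is a routine chase through the adjunction $G\dashv U$ and Lemma~\ref{lemma:trivial functor}.
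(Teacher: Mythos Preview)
Your proposal is correct and follows essentially the same route as the paper: factor the test functor's underlying graph map through $\mathbb{P}$ using that $\zeta_F$-related objects are collapsed, lift to a functor $G\mathbb{P}\to\mathbb{Y}$ via the adjunction $G\dashv U$, then descend to $G\mathbb{P}/\!\equiv$ by checking that the identity and composition generators of $\equiv$ are already respected because the test functor is a functor on $\mathbb{A}'$. You are in fact slightly more complete than the paper, whose proof verifies only the universal property and leaves condition~(1)---that $\pi\circ F$ itself is trivial---implicit in the construction, whereas you spell it out via Lemma~\ref{lemma:trivial functor}.
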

\begin{proof}

Let $F':\mathbb{A'}\rightarrow \mathbb{B}$ be a functor such that the composite $F'\circ F$ is trivial. One has to show that there is one and only one functor $H':G\mathbb{P}/\equiv\rightarrow \mathbb{B}$ such that $H'\circ\pi =F'$.\\

Since $F'\circ F$ is trivial, there is one and only one morphism of graphs $\varphi$ such that $\varphi\circ (1_{mor(\mathbb{A}')},\pi_{\zeta_F})=UF':U\mathbb{A}'\rightarrow U\mathbb{B}$. In fact, define $\varphi (g:[A_1']\rightarrow [A_2'])=F'g:F'(A_1')\rightarrow F'(A_2')$. It is well defined since, for instance, if $A_0'\in [A_1']$ then there exists a sequence of morphisms (``zigzag") $A_0\rightarrow \cdots\leftarrow A_n$ such that $F(A_0)=A_0'$ and $F(A_n)=A_1'$, which implies that $F'(A_0')=F'(A_1')$ because $F'\circ F$ is trivial (cf. Lemma \ref{lemma:trivial functor}).

There is also only one functor $H:G(\mathbb{P})\rightarrow \mathbb{B}$ such that $\varphi =UH\circ\eta_\mathbb{P} :\mathbb{P}\rightarrow U\mathbb{B}$, being $\eta_\mathbb{P}$ the unit morphism of the adjunction $U\vdash G:Graph\rightarrow Cat$ (cf. Theorem 1 in \cite[II.7]{SM:cat}).

There is also a unique functor $H'$ from the quotient category $G\mathbb{P}/\equiv$ into $\mathbb{B}$ such that $H'\circ\Pi =H$; in order to prove so, one has to show that (cf. \cite[II.8]{SM:cat}) $H$ identifies $\eta_\mathbb{P}\circ (1_{mor(\mathbb{A}')},\pi_{\zeta_F})(1_{A'})=<1_{A'}>$ and $1_{[A']}$, for every $A'\in\mathbb{A}'$, and that $H$ identifies $\eta_\mathbb{P}((1_{mor(\mathbb{A}')},\pi_{\zeta_F})(g))\circ \eta_\mathbb{P}((1_{mor(\mathbb{A}')},\pi_{\zeta_F})(f))=\eta_\mathbb{P}(g)\circ \eta_\mathbb{P}(f)=<f,g>$ with $\eta_\mathbb{P}((1_{mor(\mathbb{A}')},\pi_{\zeta_F})(g\circ f))=\eta_\mathbb{P}(g\circ f)=<g\circ f>$, for every pair $(g:A_2'\rightarrow A_3',f:A_1'\rightarrow A_2')$ of composable morphisms in $\mathbb{A}'$; this is obvious since $UH\circ\eta_\mathbb{P}\circ (1_{mor(\mathbb{A}')},\pi_{\zeta_F})=UF'$ and $F'$ is a functor.\\

It was proved just above that there is a functor $H'$ such that $H'\circ \pi =F'$. It remains to check that such functor is the unique which satisfies $H'\circ \pi =F'$.\\

Suppose that $S'$ is a functor such that $S'\circ \pi =F'$, then there is a functor $S$ such that $S=S'\circ \Pi$, and then there is a graph morphism $\sigma$ such that $US\circ \eta_\mathbb{P}=\sigma$, with $\sigma\circ (1_{mor(\mathbb{A}')},\pi_{\zeta_F})=UF'$, which implies that $\sigma =\varphi$ as defined above, and so $H'=S'$ going backwards.

\end{proof}

\section{Short CatMon-preexact sequences}
\label{sec:short preexact sequences}

Let $\mathbb{A}'$ be any category, and let $\mathbb{A}$ be a category with the same objects, $obj(\mathbb{A})=obj(\mathbb{A}')$, and such that, for any objects $A,B\in\mathbb{A}'$, if $Hom_\mathbb{A}(A,B)\neq\emptyset$ then $Hom_{\mathbb{A}'}(A,B)\neq\emptyset$ and $Hom_{\mathbb{A}'}(B,A)\neq\emptyset$ and $Hom_{\mathbb{A}}(A,B)=Hom_{\mathbb{A}'}(A,B)$.\\

Let $F:\mathbb{A}\rightarrow \mathbb{A}'$ be the inclusion functor of $\mathbb{A}$ in $\mathbb{A}'$, and $\pi : \mathbb{A}'\rightarrow G\mathbb{P}/\equiv$ the precokernel of F constructed as in Proposition \ref{prop:Catmon-Precokernels}.\\

It is an immediate consequence of the characterization of $CatMon$-prekernel in Proposition \ref{prop:CatMon-Prekernels} that $F$ is the prekernel of $\pi$. so that we have constructed a short preexact sequence

$$
\begin{picture}(100,0)

\put(0,0){$\mathbb{A}$}\put(50,0){$\mathbb{A}'$}\put(100,0){$\mathbb{G\mathbb{P}/\equiv}$.}

\put(13,5){\vector(1,0){35}}\put(63,5){\vector(1,0){35}}

\put(25,12){$F$}\put(75,12){$\pi$}.

\end{picture}
$$
for each $\mathbb{A}'\in Cat$, with $\mathbb{A}\in CatEquiv$.\\

Suppose that, one requires the category $\mathbb{A}$ just defined to satisfy in addition: for every $A,B\in\mathbb{A}'$, if $Hom_{\mathbb{A}'}(A,B)\neq\emptyset$ and $Hom_{\mathbb{A}'}(B,A)\neq\emptyset$ then $Hom_{\mathbb{A}}(A,B)\neq\emptyset$.
Then, since $obj(G\mathbb{P}/\equiv)=obj(\mathbb{A}')/\zeta_F$ and by the nature of $\eta_\mathbb{P}$ and $\Pi$ (cf. Proposition \ref{prop:Catmon-Precokernels} and \cite[II.7,8]{SM:cat}, it is clear that $G\mathbb{P}/\equiv\in CatOrd$.\\

It was proved that, for every category $\mathbb{A}'\in Cat$, there is a short $CatMon$-preexact sequence
$$
\mathbb{A}\rightarrow \mathbb{A}'\rightarrow \mathbb{A}''
$$
with $\mathbb{A}\in CatEquiv$ and $\mathbb{A''}\in CatOrd$.

Hence, the following Theorem can be stated.

\begin{theorem}
\label{theo:pretorsion theory}

The pair $(CatEquiv,CatOrd)$ is a pretorsion theory for the category of all categories $Cat$.

\end{theorem}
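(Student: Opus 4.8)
The plan is to verify the two conditions in Definition \ref{def:pretorsion theory} for the pair $(CatEquiv, CatOrd)$ with trivial objects $CatMon = CatEquiv \cap CatOrd$. The first condition has essentially already been dispatched: Proposition \ref{prop:functors from symetric to antisymetric are trivial} gives exactly $Hom_{Cat}(\mathbb{T},\mathbb{F}) = Triv_{CatMon}(\mathbb{T},\mathbb{F})$ for $\mathbb{T} \in CatEquiv$ and $\mathbb{F} \in CatOrd$, so I would simply cite it. All the real work is in condition (2), the existence of a short $CatMon$-preexact sequence $\mathbb{A} \to \mathbb{A}' \to \mathbb{A}''$ with $\mathbb{A} \in CatEquiv$ and $\mathbb{A}'' \in CatOrd$, for an arbitrary category $\mathbb{A}'$.

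For condition (2), I would proceed exactly along the lines laid out in Section \ref{sec:short preexact sequences}. First I would fix an arbitrary $\mathbb{A}' \in Cat$ and construct the ``symmetric core'' $\mathbb{A}$: take $obj(\mathbb{A}) = obj(\mathbb{A}')$ and declare $Hom_{\mathbb{A}}(A,B) = Hom_{\mathbb{A}'}(A,B)$ precisely when both $Hom_{\mathbb{A}'}(A,B) \neq \emptyset$ and $Hom_{\mathbb{A}'}(B,A) \neq \emptyset$, and $Hom_{\mathbb{A}}(A,B) = \emptyset$ otherwise. One must check $\mathbb{A}$ is a genuine category (identities are present since $Hom_{\mathbb{A}'}(A,A)\neq\emptyset$ always; composability is closed because the ``there-and-back'' condition is preserved under composition) and that $\mathbb{A} \in CatEquiv$, which is immediate from the symmetry built into the definition. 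Let $F:\mathbb{A} \to \mathbb{A}'$ be the inclusion. Then I would let $\pi : \mathbb{A}' \to G\mathbb{P}/\!\equiv$ be the precokernel of $F$ given by Proposition \ref{prop:Catmon-Precokernels}, so that by Definition \ref{def:short preexact sequence} it remains only to confirm that $F$ is a prekernel of $\pi$ and that $G\mathbb{P}/\!\equiv \; \in CatOrd$.

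That $F$ is a prekernel of $\pi$ should follow directly from the explicit description in Proposition \ref{prop:CatMon-Prekernels}: the prekernel of $\pi$ has the same objects as $\mathbb{A}'$ and keeps exactly those arrows $A \to B$ of $\mathbb{A}'$ with $\pi(A) = \pi(B)$, i.e. with $A$ and $B$ in the same class of $\zeta_F$. Since $\zeta_F$ is the equivalence relation generated by identifying $F(A_1) = F(A_2)$ whenever $Hom_{\mathbb{A}}(A_1,A_2) \neq \emptyset$ — which by the construction of $\mathbb{A}$ means whenever there is a two-sided hom-connection in $\mathbb{A}'$ — one sees that an arrow of $\mathbb{A}'$ collapses under $\pi$ exactly when its endpoints are connected by a zigzag of two-sided connections, and I would need the additional hypothesis ``$Hom_{\mathbb{A}'}(A,B)\neq\emptyset$ and $Hom_{\mathbb{A}'}(B,A)\neq\emptyset$ imply $Hom_{\mathbb{A}}(A,B)\neq\emptyset$'' (imposed in Section \ref{sec:short preexact sequences}) to make the prekernel of $\pi$ coincide with $\mathbb{A}$; this is the point requiring the most care. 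For $G\mathbb{P}/\!\equiv \; \in CatOrd$, the object set is $obj(\mathbb{A}')/\zeta_F$, and one argues that between two distinct $\zeta_F$-classes there cannot be hom-sets in both directions, because any such pair of arrows in $G\mathbb{P}/\!\equiv$ would force, via the freeness of $G$ and the defining relations of $\equiv$, the two classes to already be $\zeta_F$-identified. Having both facts, Section \ref{sec:short preexact sequences} exhibits the required short $CatMon$-preexact sequence for every $\mathbb{A}'$, and combined with condition (1) this proves $(CatEquiv, CatOrd)$ is a pretorsion theory for $Cat$. The main obstacle I anticipate is the verification that $G\mathbb{P}/\!\equiv$ is antisymmetric: this requires a genuine understanding of the congruence $\equiv$ and of how arrows in the free category on the graph $\mathbb{P}$ behave after quotienting, rather than a formal manipulation.
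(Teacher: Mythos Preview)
Your proposal is correct and follows essentially the same route as the paper: condition (1) via Proposition \ref{prop:functors from symetric to antisymetric are trivial}, and condition (2) by taking the symmetric core $\mathbb{A}\hookrightarrow\mathbb{A}'$, its precokernel $\pi$ from Proposition \ref{prop:Catmon-Precokernels}, and then invoking Proposition \ref{prop:CatMon-Prekernels} plus the description of $obj(G\mathbb{P}/\!\equiv)=obj(\mathbb{A}')/\zeta_F$ to see that the sequence is preexact with the required torsion/torsion-free ends. One small simplification you can make: the relation ``$Hom_{\mathbb{A}'}(A,B)\neq\emptyset$ and $Hom_{\mathbb{A}'}(B,A)\neq\emptyset$'' is already transitive (compose), so $\zeta_F$ equals this relation outright and no zigzag argument is needed when matching the prekernel of $\pi$ with $\mathbb{A}$.
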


\section*{Acknowledgement}
This work was supported by
 The Center for Research and Development in Mathematics and Applications (CIDMA) through the Portuguese Foundation for Science and Technology

(FCT - Funda\c{c}\~ao para a Ci\^encia e a Tecnologia),

references UIDB/04106/2020 and UIDP/04106/2020.

\end{document}